\numberwithin{equation}{section}
\theoremstyle{plain}
\newtheorem{thm}{\protect\theoremname}[section]
\newenvironment{proof}[1][\protect\proofname]{\par
\normalfont\topsep6\p@\@plus6\p@\relax
\trivlist
\itemindent\parindent
\item[\hskip\labelsep
\scshape
#1]\ignorespaces
}{%
\endtrivlist\@endpefalse
}
\providecommand{\proofname}{Proof}
\theoremstyle{plain}
\newtheorem{lem}[thm]{\protect\lemmaname}
\theoremstyle{plain}
\theoremstyle{plain}
\theoremstyle{definition}
\theoremstyle{remark}
\theoremstyle{plain}
\theoremstyle{definition}
\numberwithin{figure}{section}
\providecommand{\conjecturename}{Conjecture}
\providecommand{\corollaryname}{Corollary}
\providecommand{\definitionname}{Definition}
\providecommand{\examplename}{Example}
\providecommand{\lemmaname}{Lemma}
\providecommand{\propositionname}{Proposition}
\providecommand{\remarkname}{Remark}
\providecommand{\theoremname}{Theorem}
\newcommand{\R}{\ensuremath{\mathbb{R}}}
\newcommand{\Z}{\ensuremath{\mathbb{Z}}}
\newcommand{\T}{\ensuremath{\mathbb{T}}}
\newcommand{\E}{\ensuremath{\mathbb{E}}}
\newcommand{\wh}{\widehat}
\newcommand{\ii}{\mathbf{i}}
\newcommand{\pp}[1]{p^{- \!\!\!\! #1}}
\newcommand{\qq}[1]{q^{- \!\!\!\! #1}}
\begin{document}


\title{Random walk on the randomly-oriented Manhattan lattice}
\author{Sean Ledger}
\address[SL]{School of Mathematics, University of Bristol and Heilbronn Institute for Mathematical Research, Bristol, BS8 1TW, UK.}
\email{sean.ledger@bristol.ac.uk}
\author{B\'{a}lint T\'{o}th }
\address[BT]{School of Mathematics, University of Bristol, Bristol, BS8 1TW, UK and R\'{e}nyi Institute, Budapest, HU.}
\email{balint.toth@bristol.ac.uk}
\author{Benedek Valk\'{o}}
\address[BV]{Department of Mathematics, University of Wisconsin -- Madison,  Madison,  WI 53706, USA} 
\email{valko@math.wisc.edu}

\maketitle

\begin{abstract}
In the randomly-oriented Manhattan lattice, every line in $\mathbb{Z}^d$ is assigned a uniform random direction. We consider the directed graph whose vertex set is $\mathbb{Z}^d$ and whose edges connect nearest neighbours, but only in the direction fixed by the line orientations. Random walk on this directed graph chooses uniformly from the $d$ legal neighbours at each step. We prove that this walk is superdiffusive in two and three dimensions. 
The model is diffusive in four and more dimensions. 
\end{abstract}

\section{Introduction, notation and results} \label{s:intro}

To define the randomly-oriented Manhattan lattice, let $\mathcal{E} = \{ \pm e_1, \pm e_2, \dots, \pm e_d \}$ be the canonical unit vectors in $\mathbb{Z}^d$ and let
\[
U_i := \{ x \in \mathbb{Z}^d : \langle x, e_i \rangle = 0\},
	\qquad i \in \{1,2,\dots,d \}.
\]
These  $(d-1)$-dimensional subspaces of $\mathbb{Z}^d$  allow us to uniquely index the lines in $\mathbb{Z}^d$ that are parallel to a canonical unit vector
as
\[
V(i, x) := 
	 \{ x + t e_i : t \in \Z \},
	\qquad \textrm{ for } i \in \{1,2,\dots,d \}, x \in U_i.
\]
Assign to each line $V(i, x)$, $x\in U_i$ the direction $e_i$ or $-e_i$  with probability $1/2$ each, independently of each other. For each $x\in \Z^d$ we denote by $\omega(i,x)$ the chosen direction corresponding to  the line $\{x+t e_i: t\in \Z\}$. Note that $\omega(i,x)=\omega(i, x-\langle x,e_i\rangle) $.

We study a continuous-time nearest neighbor random walk on $\mathbb{Z}^d$ in the random environment $\omega(i,x)$. The walker takes  steps at rate $d$, and if it is at $x \in \mathbb{Z}^d$ then its next position is chosen uniformly from the set $\{x+\omega(i,x), 1\le i\le d\}$. (See Figure \ref{Intro_Fig_ExampleEvolution}.)
%
%
%
Our main object of interest is the mean-square displacement 
\[
E(t) := \mathbb{E}[|X_t|^2], 	\qquad t \geq 0,
\]
where $X_t$ denotes the position of the walker at time $t$. Notice that the model is trivial when $d=1$.

We analyse the asymptotics of $E$ by applying the \emph{resolvent method}. The method was introduced in \cite{komorowski_olla_2002, landim_quastel_salmhoffer_yau_2004, yau2004} to give diffusivity estimates for  a tracer particle in a Gaussian drift field, respectively,  for a second class particle in the asymmetric exclusion process on $\Z$  and $\Z^2$.
 Later the method was used in \cite{tarres_toth_valko_2012, toth_valko_2012} to study the long-time behaviour of self-repelling diffusions pushed by the negative gradient of their local time and diffusion driven by the curl-field of the Gaussian Free Field in 2 dimensions.
 
 In this note we  show how the method transfers to the Manhattan lattice to prove superdiffusivity of the random walk in $d=2$ and $d=3$. 
 The method employed is very similar to that of  \cite{tarres_toth_valko_2012, toth_valko_2012}. However, this particular model has some interest and notoriety (see e.g.~\cite{guillotin_le_2007, guillotin_le_2008}) and the authors have been repeatedly requested to spell out the full proof.
 
%
Our main theorem provides bounds on the  Laplace transform of $E$
\[
\widehat{E}(\lambda) = \int_0^\infty e^{-\lambda t} E(t) dt,
	\qquad \textrm{as } \lambda \to 0.
\]

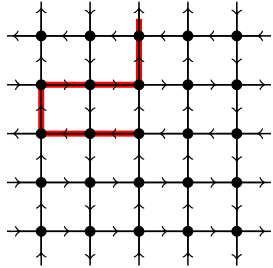
\begin{figure}
\begin{center}
\begin{tikzpicture}[scale=0.65]
\draw[-, line width = 0.8mm, color = red] (0,0) -- (-2,0) -- (-2,+1) -- (0,+1) -- (0,+2.35);

\draw[->, line width = 0.2mm] (-2,-2.5)--(-2,-2.5 + 0.08 );
\draw[->, line width = 0.2mm] (-2,-2.5)--(-2,-1.5 + 0.08);
\draw[->, line width = 0.2mm] (-2,-1.5)--(-2,-0.5 + 0.08);
\draw[->, line width = 0.2mm] (-2,-0.5)--(-2,0.5 + 0.08);
\draw[->, line width = 0.2mm] (-2,0.5)--(-2,1.5 + 0.08);
\draw[->, line width = 0.2mm] (-2,1.5)--(-2,2.5 + 0.08);

\draw[->, line width = 0.2mm] (-1,2.5)--(-1,2.5 - 0.08);
\draw[->, line width = 0.2mm] (-1,+2.5) -- (-1,1.5 - 0.08);
\draw[->, line width = 0.2mm] (-1,1.5) -- (-1,0.5 - 0.08);
\draw[->, line width = 0.2mm] (-1,0.5) -- (-1,-0.5 - 0.08);
\draw[->, line width = 0.2mm] (-1,-0.5) -- (-1,-1.5 - 0.08);
\draw[->, line width = 0.2mm] (-1,-1.5) -- (-1,-2.5 - 0.08);

\draw[->, line width = 0.2mm] (0,-2.5) -- (0,-2.5  + 0.08);
\draw[->, line width = 0.2mm] (0,-2.5) -- (0,-1.5 + 0.08);
\draw[->, line width = 0.2mm] (0,-1.5) -- (0,-0.5 + 0.08);
\draw[->, line width = 0.2mm] (0,-0.5) -- (0,0.5 + 0.08);
\draw[->, line width = 0.2mm] (0,0.5) -- (0,1.5 + 0.08);
\draw[->, line width = 0.2mm] (0,1.5) -- (0,2.5  + 0.08);

\draw[->, line width = 0.2mm] (1,-2.5) -- (1,-2.5 + 0.08);
\draw[->, line width = 0.2mm] (1,-2.5) -- (1,-1.5 + 0.08);
\draw[->, line width = 0.2mm] (1,-1.5) -- (1,-0.5 + 0.08);
\draw[->, line width = 0.2mm] (1,-0.5) -- (1,0.5 + 0.08);
\draw[->, line width = 0.2mm] (1,0.5) -- (1,1.5 + 0.08);
\draw[->, line width = 0.2mm] (1,1.5) -- (1,2.5 + 0.08);

\draw[->, line width = 0.2mm] (2,2.5)--(2,2.5 - 0.08);
\draw[->, line width = 0.2mm] (2,+2.5) -- (2,1.5 - 0.08);
\draw[->, line width = 0.2mm] (2,1.5) -- (2,0.5 - 0.08);
\draw[->, line width = 0.2mm] (2,0.5) -- (2,-0.5 - 0.08);
\draw[->, line width = 0.2mm] (2,-0.5) -- (2,-1.5 - 0.08);
\draw[->, line width = 0.2mm] (2,-1.5) -- (2,-2.5 - 0.08);

\draw[->, line width = 0.2mm] (-2.5,-2) -- (-2.5+0.08,-2);
\draw[->, line width = 0.2mm] (-2.5,-2) -- (-1.5+0.08,-2);
\draw[->, line width = 0.2mm] (-1.5,-2) -- (-0.5+0.08,-2);
\draw[->, line width = 0.2mm] (-0.5,-2) -- (0.5+0.08,-2);
\draw[->, line width = 0.2mm] (0.5,-2) -- (1.5+0.08,-2);
\draw[->, line width = 0.2mm] (1.5,-2) -- (2.5+0.08,-2);

\draw[->, line width = 0.2mm] (-2.5,-1) -- (-2.5+0.08,-1);
\draw[->, line width = 0.2mm] (-2.5,-1) -- (-1.5+0.08,-1);
\draw[->, line width = 0.2mm] (-1.5,-1) -- (-0.5+0.08,-1);
\draw[->, line width = 0.2mm] (-0.5,-1) -- (0.5+0.08,-1);
\draw[->, line width = 0.2mm] (0.5,-1) -- (1.5+0.08,-1);
\draw[->, line width = 0.2mm] (1.5,-1) -- (2.5+0.08,-1);

\draw[->, line width = 0.2mm] (2.5,0) -- (2.5-0.08,0);
\draw[->, line width = 0.2mm] (2.5,0) -- (1.5-0.08,0);
\draw[->, line width = 0.2mm] (1.5,0) -- (0.5-0.08,0);
\draw[->, line width = 0.2mm] (0.5,0) -- (-0.5-0.08,0);
\draw[->, line width = 0.2mm] (-0.5,0) -- (-1.5-0.08,0);
\draw[->, line width = 0.2mm] (-1.5,0) -- (-2.5-0.08,0);

\draw[->, line width = 0.2mm] (-2.5,+1) -- (-2.5+0.08,+1);
\draw[->, line width = 0.2mm] (-2.5,+1) -- (-1.5+0.08,+1);
\draw[->, line width = 0.2mm] (-1.5,+1) -- (-0.5+0.08,+1);
\draw[->, line width = 0.2mm] (-0.5,+1) -- (0.5+0.08,+1);
\draw[->, line width = 0.2mm] (0.5,+1) -- (1.5+0.08,+1);
\draw[->, line width = 0.2mm] (1.5,+1) -- (2.5+0.08,+1);

\draw[->, line width = 0.2mm] (2.5,2) -- (2.5-0.08,2);
\draw[->, line width = 0.2mm] (2.5,2) -- (1.5-0.08,2);
\draw[->, line width = 0.2mm] (1.5,2) -- (0.5-0.08,2);
\draw[->, line width = 0.2mm] (0.5,2) -- (-0.5-0.08,2);
\draw[->, line width = 0.2mm] (-0.5,2) -- (-1.5-0.08,2);
\draw[->, line width = 0.2mm] (-1.5,2) -- (-2.5-0.08,2);

\draw[-, line width = 0.2mm] (2.7,2) -- (-2.7,2);
\draw[-, line width = 0.2mm] (2.7,1) -- (-2.7,1);
\draw[-, line width = 0.2mm] (2.7,0) -- (-2.7,0);
\draw[-, line width = 0.2mm] (2.7,-1) -- (-2.7,-1);
\draw[-, line width = 0.2mm] (2.7,-2) -- (-2.7,-2);

\draw[-, line width = 0.2mm] (2,2.7) -- (2,-2.7);
\draw[-, line width = 0.2mm] (1,2.7) -- (1,-2.7);
\draw[-, line width = 0.2mm] (0,2.7) -- (0,-2.7);
\draw[-, line width = 0.2mm] (-1,2.7) -- (-1,-2.7);
\draw[-, line width = 0.2mm] (-2,2.7) -- (-2,-2.7);

\draw[fill] (-2,-2) circle [radius = 0.1];
\draw[fill] (-2,-1) circle [radius = 0.1];
\draw[fill] (-2,-0) circle [radius = 0.1];
\draw[fill] (-2,+1) circle [radius = 0.1];
\draw[fill] (-2,+2) circle [radius = 0.1];

\draw[fill] (-1,-2) circle [radius = 0.1];
\draw[fill] (-1,-1) circle [radius = 0.1];
\draw[fill] (-1,-0) circle [radius = 0.1];
\draw[fill] (-1,+1) circle [radius = 0.1];
\draw[fill] (-1,+2) circle [radius = 0.1];

\draw[fill] (0,-2) circle [radius = 0.1];
\draw[fill] (0,-1) circle [radius = 0.1];
\draw[fill] (0,-0) circle [radius = 0.1];
\draw[fill] (0,+1) circle [radius = 0.1];
\draw[fill] (0,+2) circle [radius = 0.1];

\draw[fill] (1,-2) circle [radius = 0.1];
\draw[fill] (1,-1) circle [radius = 0.1];
\draw[fill] (1,-0) circle [radius = 0.1];
\draw[fill] (1,+1) circle [radius = 0.1];
\draw[fill] (1,+2) circle [radius = 0.1];

\draw[fill] (2,-2) circle [radius = 0.1];
\draw[fill] (2,-1) circle [radius = 0.1];
\draw[fill] (2,-0) circle [radius = 0.1];
\draw[fill] (2,+1) circle [radius = 0.1];
\draw[fill] (2,+2) circle [radius = 0.1];
\end{tikzpicture} 
\caption{\label{Intro_Fig_ExampleEvolution} \small A typical initialisation of the random line directions in the Manhattan lattice with a legal path for the walker (red).}
\end{center}
\end{figure}

\begin{thm}
\label{Intro_Thm_MainThm}
There exists finite positive constants $C$ and  $\lambda_0$ such that for all $0<\lambda<\lambda_0$ we have
\[
\begin{array}{cc}
C^{-1} \lambda^{-9/4} \leq\widehat{E}(\lambda)\leq C \lambda^{-5/2} & \textrm{if }d=2, \text{ and}\\
C^{-1} \lambda^{-2} \log \log (\lambda^{-1}) \leq\widehat{E}(\lambda)\leq C\lambda^{-2} \log (\lambda^{-1}) \quad \ & \textrm{if }d=3.
\end{array}
\]
\end{thm}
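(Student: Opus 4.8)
The plan is to study the walk through the \emph{environment seen from the particle} and to read off the asymptotics of $\widehat E$ by the resolvent method, exactly in the spirit of \cite{tarres_toth_valko_2012, toth_valko_2012}. Let $\eta_t := \tau_{X_t}\omega$ be the environment recentred at the current position. This is a Markov process on orientation configurations, stationary and ergodic under the product measure $\pi$ (each line independently $\pm$ with probability $1/2$); stationarity of $\pi$ follows because each vertex of the oriented lattice has in-degree and out-degree both equal to $d$, so the walk is doubly stochastic. Writing $\varphi(\omega):=\sum_{i=1}^d \omega(i,0)$ for the local drift and decomposing $X_t=\int_0^t \varphi(\eta_s)\,ds + M_t$ into a drift integral and a martingale, stationarity turns $E(t)$ into an integral of the velocity autocorrelation $\langle \varphi, e^{tL}\varphi\rangle$, and a Laplace transform gives
\[
\widehat E(\lambda)=\frac{2}{\lambda^2}\,\big\langle \varphi,(\lambda-L)^{-1}\varphi\big\rangle + O(\lambda^{-2}),
\]
where $L$ is the generator of $\eta_t$ on $L^2(\pi)$ and the $O(\lambda^{-2})$ collects the martingale quadratic variation. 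Since the resolvent term will diverge as $\lambda\to0$, it dominates the error in both regimes, and the problem reduces to two-sided bounds on $\langle \varphi,(\lambda-L)^{-1}\varphi\rangle$.

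Next I would record the algebraic structure that makes the method work. Averaging the jump $\tau_{\omega(i,0)}$ over the orientation bit $\sigma_{i,0}\in\{\pm1\}$, with $\omega(i,0)=\sigma_{i,0}e_i$, splits the generator as $L=-S+A$ with
\[
S=\sum_{i=1}^d\Big(1-\tfrac{\tau_{e_i}+\tau_{-e_i}}{2}\Big)\ge 0,\qquad A=\sum_{i=1}^d \sigma_{i,0}\,\tfrac{\tau_{e_i}-\tau_{-e_i}}{2}.
\]
Here $S$ is self-adjoint and $A$ skew-adjoint on $L^2(\pi)$, using that $\sigma_{i,0}$ is constant along its own line and hence commutes with $\tau_{\pm e_i}$. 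Grading $L^2(\pi)$ by Walsh degree (the number of orientation bits involved), $S$ preserves the degree while $A$ shifts it by $\pm1$, and the drift $\varphi$ lies in degree one. The crucial geometric feature is that $\sigma_{i,y}$ depends only on the projection of $y$ to $U_i\cong\Z^{d-1}$; consequently $S$ restricted to the degree-one, direction-$i$ subspace acts as the simple-random-walk Laplacian on $\Z^{d-1}$. The effective spatial dimension felt by the drift is therefore $d-1$, and this is the source of the superdiffusivity.

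The upper bound is then immediate from skew-adjointness. With $u:=(\lambda+S-A)^{-1}\varphi$, taking real parts gives $\langle\varphi,(\lambda-L)^{-1}\varphi\rangle=\langle u,(\lambda+S)u\rangle$, and Cauchy--Schwarz yields the comparison
\[
\big\langle \varphi,(\lambda+S-A)^{-1}\varphi\big\rangle\le \big\langle \varphi,(\lambda+S)^{-1}\varphi\big\rangle=d\int_{\T^{d-1}}\frac{dk}{\lambda+\widehat S(k)},\qquad \widehat S(k)\asymp|k|^2.
\]
This Green's-function integral is of order $\lambda^{-1/2}$ for $d=2$ and of order $\log(\lambda^{-1})$ for $d=3$, producing, after the factor $\lambda^{-2}$, the claimed upper bounds $\lambda^{-5/2}$ and $\lambda^{-2}\log(\lambda^{-1})$.

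The lower bound is the heart of the matter and the step I expect to be the main obstacle. I would use the exact variational identity
\[
\big\langle \varphi,(\lambda+S-A)^{-1}\varphi\big\rangle=\sup_{\psi}\Big(2\langle\varphi,\psi\rangle-\big\|(\lambda+S)^{1/2}\psi\big\|^2-\big\|(\lambda+S)^{-1/2}A\psi\big\|^2\Big),
\]
valid because $(\lambda+S)-A(\lambda+S)^{-1}A=(\lambda+S-A)(\lambda+S)^{-1}(\lambda+S+A)$. A test function living in a single degree cannot beat the diffusive value, since the penalty $\|(\lambda+S)^{-1/2}A\psi\|^2$ is then uncancelled; the gain must come from a \emph{ladder} test function $\psi=\sum_{n\ge1}\psi^{(n)}$ with $\psi^{(n)}$ of Walsh degree $n$, arranged so that in each sector the degree-raising action of $\psi^{(n-1)}$ is cancelled, to leading order, by the degree-lowering action of $\psi^{(n+1)}$. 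This converts the optimisation into a recursion whose coefficients are the $(d-1)$-dimensional Green's functions above, truncated at a level $N$ beyond which the accumulated Dirichlet energy $\sum_n\|(\lambda+S)^{1/2}\psi^{(n)}\|^2$ becomes prohibitive. Optimising over $N$ gives the stated gains: for $d=2$ the strongly divergent one-dimensional Green's function ($\asymp\lambda^{-1/2}$) supports enough levels to reach $\lambda^{-1/4}$, hence $\widehat E\gtrsim\lambda^{-9/4}$; for $d=3$ the marginal, logarithmically divergent two-dimensional Green's function is fed back through the recursion and only a $\log\log(\lambda^{-1})$ survives, hence $\widehat E\gtrsim\lambda^{-2}\log\log(\lambda^{-1})$. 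Quantifying the adjacent-sector cancellation and the admissible truncation level is the technical crux, where the estimates of \cite{tarres_toth_valko_2012, toth_valko_2012} must be adapted to the present $(d-1)$-dimensional geometry.
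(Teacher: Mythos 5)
Your setup (environment seen from the particle, stationarity of the product measure $\pi$, the $S$/$A$ decomposition, the reduction of $\widehat E$ to the resolvent $\langle\varphi,(\lambda-L)^{-1}\varphi\rangle$ modulo an $O(\lambda^{-2})$ martingale term, and the variational formula) matches the paper, and your upper bound is correct and is exactly the paper's: drop the skew part, compute $\langle\varphi,(\lambda+S)^{-1}\varphi\rangle$ as a $(d-1)$-dimensional lattice Green's function, and get $\lambda^{-1/2}$ resp.\ $\log(\lambda^{-1})$.

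The gap is in the lower bound, and it starts from a premise that is false for this model: you assert that ``a test function living in a single degree cannot beat the diffusive value, since the penalty $\|(\lambda+S)^{-1/2}A\psi\|^2$ is then uncancelled,'' and on that basis you propose an unexecuted ladder construction over Walsh degrees. In fact the paper's lower bound uses precisely a single degree-one test function $\psi(\omega)=\sum_{x\in U_1}u(x)\,\omega(1,x)$. The point you miss is that the penalty, while uncancelled, is \emph{small where it matters}: $A\psi$ lies in degree two, $(\lambda-S)^{-1}$ on degree two is explicitly diagonal in Fourier, and integrating out one of the two momenta gives (for $d=2$) the bound $(A\psi,(\lambda-S)^{-1}A\psi)_\pi\le C\lambda^{-1/2}\int\sin^2(p_2)\,|\widehat u(p)|^2\,dp$. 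The factor $\sin^2(p_2)$ vanishes at $p_2=0$, so the penalty does not overwhelm the gain $2\widehat u-(\lambda+\widehat d)\,|\widehat u|^2$; optimizing the resulting quadratic form pointwise, $\widehat u(p)=\bigl(\lambda+\widehat d(\pp{1})+C\lambda^{-1/2}\sin^2(p_2)\bigr)^{-1}$, yields $\int_{\T}\bigl(\lambda+C\lambda^{-1/2}p_2^2\bigr)^{-1}dp_2\asymp\lambda^{-1/4}$, hence $\widehat E\gtrsim\lambda^{-9/4}$; the analogous computation in $d=3$ replaces $\lambda^{-1/2}$ by $|\log(\lambda+\sin^2(p_j/2))|$ and produces the $\log\log(\lambda^{-1})$. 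So the exponents you quote are correct, but the mechanism you attribute them to (truncation level of a multi-degree ladder) is not the one that produces them, and since you never quantify the ``adjacent-sector cancellation'' or the truncation, your proposal does not actually establish the lower bounds. To repair it, abandon the ladder, keep a degree-one $\psi$, and prove the two integral estimates corresponding to the paper's Lemma~\ref{lem:integr}.
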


The bounds in Theorem \ref{Intro_Thm_MainThm} are time-averaged,  they should correspond to behaviour $t^{5/4} \lesssim E(t) \lesssim t^{3/2}$ in two dimensions and $t \log \log t \lesssim E(t) \lesssim t \log t$ in three dimensions. In fact, the upper bounds on $E(t)$ can be transferred from those on $\widehat{E}(\lambda)$ by \cite[Lem.~1]{tarres_toth_valko_2012}. The lower bounds on $\widehat{E}(\lambda)$ do not transfer pointwise without some additional information on $E(t)$, but they do give the corresponding growth rates for the Ces\`{a}ro average  $\tfrac1{t} \int_0^tE(s)ds$.

In four and higher dimensions the random walk is known to be diffusive, in fact \cite{kozma_toth_2017, toth_2018} proves central limit theorem for the random walk both in the quenched and annealed environment.

The bounds on the growth rates in Theorem \ref{Intro_Thm_MainThm} are not sharp. A non-rigorous Alder--Wainwright type scaling argument (see e.g.~\cite{toth_valko_2012}) suggests that the true behavior is $E(t)\asymp  t^{4/3} $ for $d=2$, and $E(t)\asymp  t (\log t)^{1/2}$ for $d=3$. 
%
%
 A heuristic  explanation for the superdiffusive behaviour is that when the walk enters a region with many lines oriented in the same direction, it tends to follow a long and relatively straight path in that direction (see Figure \ref{Intro_Fig_10000steps}).

Random walk on the randomly-oriented Manhattan lattice is closely related to the Matheron--de Marsily (MdM) model, originally introduced in \cite{matheron_marsily_1980}. In the MdM model, one dimension has fixed uniformly chosen  random line directions as in the Manhattan lattice, but all other components are undirected. At each jump time the walker choses uniformly one of the $d$ lines going through its position. If the chosen line  is one of the directed ones  then the walker takes a step in that direction, otherwise it choses one of the two neighbours on the line randomly. 

The MdM model is well-studied \cite{guillotin_le_2007, guillotin_le_2008}, and its mean-square displacement can be analysed exactly, giving scaling $E(t) \asymp t^{3/2}$ when $d=2$ and $E(t) \asymp t\log t$ when $d=3$. There is a natural way to interpolate between the MdM and the Manhattan lattice model: suppose that  $d_\mathrm{fix} \geq 1$ of the dimensions of the lattice are directed and $d_\mathrm{free}$ are undirected, where $d_\mathrm{fix} + d_\mathrm{free} = d$. Then $d_\mathrm{fix}=1$ gives the MdM model, while $d_\mathrm{fix} =d$ gives the Manhattan lattice model. 
This offers no new models in two dimensions and all the intermediate models can be shown to be diffusive in four and higher dimension. There is, however, one potentially interesting case when $d = 3$ and $d_\mathrm{fix} = 2$. 
In Section \ref{Sect_dfixis2} we show that for this model  we have 
\begin{align}\label{MdM12}
C^{-1} \lambda^{-2} \sqrt{\log \lambda^{-1}}\le \widehat{E}(\lambda)\le C \lambda^{-2} \log(\lambda^{-1}),
\end{align}
provided that  $\lambda$ is small enough.  Non-rigorous scaling arguments suggests that the true growth is $E(t) \asymp t(\log t)^{2/3}$ in this case.

%
%
%
%
%

\begin{figure}
\begin{center}
\includegraphics[scale=0.5]{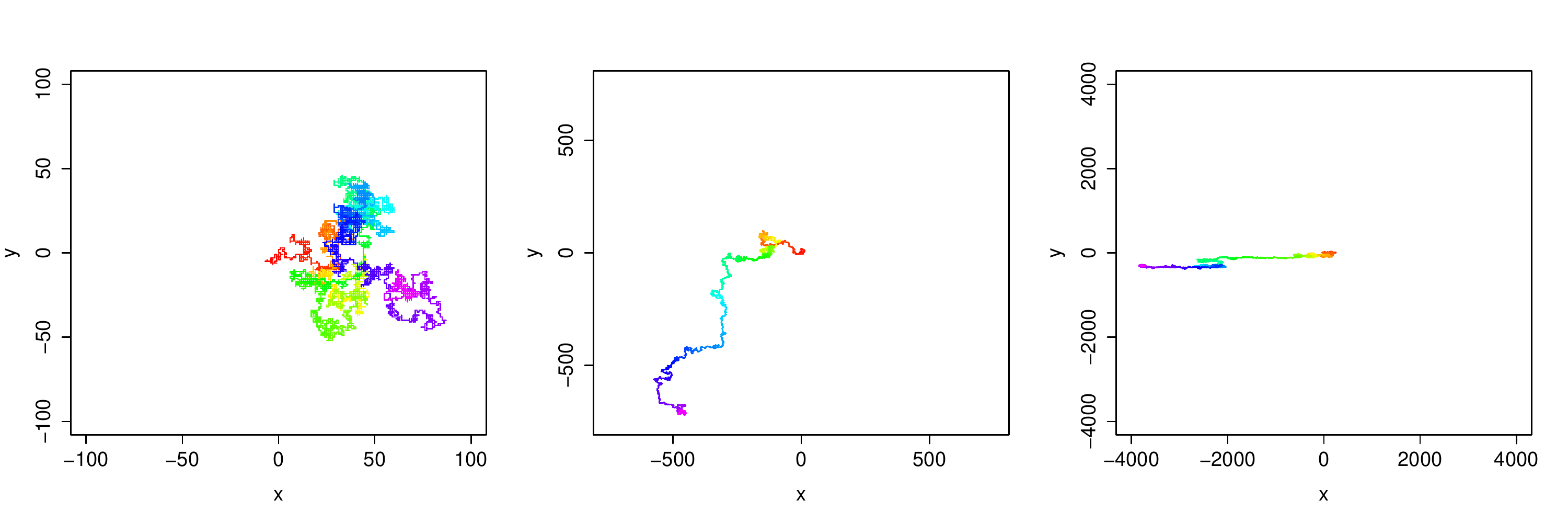}
\vspace{-0.3cm}
\caption{ \label{Intro_Fig_10000steps} \small Realisation for the first 10,000 steps: the left shows simple random walk on $\mathbb{Z}^2$, the centre shows random walk on a randomly-oriented Manhattan lattice (orientations not shown) and the right shows the MdM model with orientations in the $x$-component. Time is indexed by colour. }
\end{center}
\end{figure}

\subsection*{Notation and outline of the proof}

We will proceed by analysing the environment of randomly oriented lines as seen from the position of the random walker. Denote the set of possible environments by  
\[
\Omega = \bigotimes_{i=1}^d \bigotimes_{x \in U_i}  \{ -1,1 \}.
\]
For a given $x\in U_i$ the set $\{-1,1\}$ corresponds to $\{-e_i,e_i\}$. For an $\omega\in \Omega$ we denote the coordinates (with a slight abuse of notation) by $\omega(i,x)$, $1\le i\le d$, $x\in U_i$. 

Let 
$\tau_i : \Omega \to \Omega$  be the translation of the environment by $e_i$ and $\tau_i^{-1}$ its inverse. These act on the coordinates as 
\begin{align*}
&\hskip100pt \tau_i \omega(i,x)=\tau_i^{-1} \omega(i,x)=\omega(i,x) \qquad \text{and}\\[5pt]
&\tau_i \omega(j, x)
	= \omega(j, x +e_i), \qquad \text{and}\qquad
\tau_i^{-1} \omega(j, x)
	= \omega(j, x - e_i), \quad \text{for $j\neq i$}.
\end{align*}
The joint distribution of the environment is given by  the product measure
\begin{equation}
\label{eq:Intro_pi}
\pi= \bigotimes_{i=1}^d \bigotimes_{x \in U_i} \mu_{i,x},
\end{equation}
where $\mu_{i,x}$ is the uniform measure on $\{-1,1\}$. Note that $\pi$ is  invariant with respect to the translations $\tau_i$, $\tau_i^{-1}$. For functions $f,g \in L^2(\Omega,\pi)$ we will use the notation $(f,g)_\pi=\int f g d\pi$.

Let $\eta_t\in \Omega$ be the environment seen from the position of the random walker at time $t$. Then $\eta_t$ is a Markov process, and its generator can be expressed as follows: 
\begin{align}\label{GG}
Gf(\omega) :=  \sum_{i=1}^d \left(  
	\frac{1+\omega(i,0)}{2} f(\tau_i \omega) 
		+ \frac{1-\omega(i,0)}{2}   f(\tau_i^{-1} \omega)
		-f(\omega)
\right).
\end{align}
Note that if $\eta = (\eta_t)_{t \geq 0}$ is the environment process viewed from the walker and $X = (X_t)_{t \geq 0}$ is the  position of the walker, then 
\begin{equation}
\label{eq:Intro_InTermsOfInit}
\eta_t(i, x) = \eta_0(i, x - X_t).
\end{equation}
An important observation is that $\pi$ is an invariant measure for $\eta_t$.
It can be checked that the adjoint of $G$ is given by 
%
%
\[
G^* f(\omega) = \sum_{i=1}^d \left(  
	\frac{1-\omega(i,0)}{2} f(\tau_i \omega) 
		+ \frac{1+\omega(i,0)}{2}   f(\tau_i^{-1} \omega)
		-f(\omega)
\right),
\]
and hence the symmetric and antisymmetric parts of $G$ are given by 
\begin{align}\label{Sgen}
S& = \tfrac{1}{2}(G + G^*) \qquad\qquad Sf(\omega) =\tfrac12  \sum_{i=1}^{d} ( 
 f(\tau_i \omega) +  f(\tau_i^{-1} \omega) -2 f(\omega)
), \\
A&= \tfrac{1}{2}(G - G^*) \qquad\qquad Af(\omega) =  \sum_{i=1}^{d} \omega(i,0) (   f(\tau_i \omega) - f(\tau_i^{-1} \omega)).\label{Agen}
\end{align}
Notice that $S$ is the generator of a the environment process as seen from a symmetric simple random walk on $\mathbb{Z}^d$. 

We now sketch the basic strategy of the resolvent method. 
By symmetry we have $E(t)=\E|X_t|^2=d\cdot \E|X^1_t|^2$ where $X^1_t$ is the first coordinate of $X_t$. Observe that  $(X_t^1,\eta_t)$ is also a Markov process with a generator 
\[
\widetilde G_1 f(z, \omega)=\frac{1+\omega(1,0)}{2} f(z+1, \tau_1 \omega) 
		+ \frac{1-\omega(1,0)}{2}   f(z-1,\tau_1^{-1} \omega)
		-f(z, \omega),
\]
and that with $f(z,\omega)=z$ we get $\widetilde G_1 f(z,\omega)=\omega(1,0)$. From this it follows 
that 
\[
X_t^1=M_t+\int_0^t \phi(\eta_s) ds,
\]
where $\phi(\omega)=\omega(1,0)$, and $M_t$ is a martingale with $\E[M_t^2]=c t$ with a constant $c$.   Introduce the quantity 
\[
	E_G(t) := \mathbb{E} \Big[ \Big( \int^t_0 \phi(\eta_s) ds \Big)^2 \Big]	
		= 2\int^t_0 (t-s)\mathbb{E}[\phi(\eta_0)\phi(\eta_s)] ds.
\]
From the  inequality $\tfrac{1}{2}a^2 - b^2 \leq (a+b)^2 \leq 2a^2 + 2b^2$ and the fact that $\E[M_t^2]=ct$ it follows that   superlinear upper and lower bounds on $E_G(t)^2$ (or the corresponding bounds on its Laplace transform)  imply similar bounds (with different constants) on $\E[|X^1_t|^2]$, and hence $\E[|X_t|^2]$ as well. (In fact, it can be shown that $\E[|X_t^1|^2]=\E[M_t^2]+E_G(t)$, but this is not needed for us.)  
%
%
%
%
This means that if we give upper and lower bounds on $\widehat{E}_G(\lambda)$ that grow faster than $\lambda^{-2}$ as $\lambda\to 0$,  then these bounds will hold for $\widehat{E}(\lambda)$ as well, up to constants. Hence it is enough to estimate $\wh E_G$. 

From the definition of $E_G$ it follows that $\widehat{E}_G(\lambda) = 2\lambda^{-2} (\phi, (\lambda - G)^{-1} \phi)_\pi $. The resolvent method relies on the following variational representation of $(\phi, (\lambda - G)^{-1} \phi)_\pi $:
\begin{align}
\label{eq:VarForm_VarForm}
(\phi, (\lambda - G)^{-1} \phi)_\pi 
	&= \sup_{\psi \in L^2(\Omega, \pi)} \Big\{ 
		2(\phi, \psi)_\pi - (\psi, (\lambda - S) \psi)_\pi - (A\psi, (\lambda - S)^{-1} A\psi)_\pi
	\Big\}. 
\end{align}
(A derivation of this formula can be found in \cite{sethuraman}.) Since the right hand side of (\ref{eq:VarForm_VarForm})  is a supremum, evaluating the expression $2(\phi, \psi)_\pi - (\psi, (\lambda - S) \psi)_\pi - (A\psi, (\lambda - S)^{-1} A\psi)_\pi$ for a given $\psi \in L^2(\Omega, \pi)$ will give a lower bound on $(\phi, (\lambda - G)^{-1} \phi)_\pi$, and hence on $\widehat E(\lambda)$. The  lower bounds in Theorem \ref{Intro_Thm_MainThm} will follow from   careful choices of the test function $\psi$. The detailed proof is carried out in Section \ref{Sect_Lower}. The same idea is used for the lower bound for the intermediate MdM model with $d_\mathrm{fix} = 2$ and $d_\mathrm{free} = 1$, the proof is presented in  Section \ref{Sect_dfixis2}.

The upper bounds are easier to obtain. Note that because $S$ is self-adjoint,  the term $(A\psi, (\lambda - S)^{-1} A\psi)_\pi$ is nonnegative. Dropping it from the expression inside the supremum in (\ref{eq:VarForm_VarForm}) thus gives the following upper bound:
\[
(\phi, (\lambda - G)^{-1} \phi)_\pi 
	\le  \sup_{\psi \in L^2(\Omega, \pi)} \big\{ 
		2(\phi, \psi)_\pi - (\psi, (\lambda - S) \psi)_\pi
	\big\}=(\phi, (\lambda - S)^{-1} \phi)_\pi.
\] 
Since $S$ is the generator of the environment process as seen from a symmetric simple random walk,  $(\phi, (\lambda - S)^{-1} \phi)_\pi$ can be computed directly, which leads to the upper bounds on $\wh E_G(\lambda)$. This is demonstrated  in Section \ref{Sect_Upper}.

\subsection*{Acknowledgements} BT was supported by EPSRC (UK) Established Career Fellowship EP/P003656/1 and by OTKA (HU) K-109684. BV  was partially supported by the NSF award DMS-1712551 and the Simons Foundation.

%
%
%

\section{Proof of lower bound in Theorem \ref{Intro_Thm_MainThm}} 
\label{Sect_Lower}

Our goal will be to find an appropriate test function $\psi \in L^2(\Omega, \pi)$ where  the expression inside the supremum in (\ref{eq:VarForm_VarForm}) can be evaluated, and is sufficiently large. We will look for the test function in the form 
%
%
%
\begin{equation}
\label{eq:Lower_FormOfPsi}
\psi(\omega) := \sum_{x \in U_1} u(x) \omega(1,x),
\end{equation}
where $u \in L^2(U_1)$ is an even real function that could also depend on $\lambda$. The precise form of $u$ will be stated later in this section.  

We will start with some explicit computations involving the terms in (\ref{eq:VarForm_VarForm}). 
 In the following we will use the notation
\[
\nabla^+_i f(x) := f(x+e_i) - f(x),
\qquad \nabla_i f(x) := f(x+e_i) - f(x-e_i).
\]

\begin{lem}[Preliminary calculations]
\label{Lower_Lem_SomeCalculations}
With $\psi$ defined as in (\ref{eq:Lower_FormOfPsi}) we have:
\begin{enumerate}[(i)]
\item $(\phi, \psi)_\pi = u(0)$, $(\psi, \psi)_\pi = \Vert u \Vert_2^2$,
\item $(\psi, S\psi)_\pi = -\sum_{i=2}^d \Vert \nabla^+_i u \Vert_2^2 $,
\item $A\psi(\omega) = -\sum_{i=2}^d  \sum_{x \in U_1} \omega(i, 0) \omega(1, x) \nabla_i u(x) $,
\item Suppose that for fixed $i\neq j$ we have 
\[
\zeta(\omega)= \sum_{x\in U_i} \sum_{
y\in U_j} v(x,y) \omega(i,x) \omega(j,y)
\]
where $\sum_{x,y} v(x,y)^2<\infty$. Then 
\begin{align}\notag
S \zeta(\omega)&=\frac12 \sum_{x,y} (v(x+e_j,y)+v(x-e_j,y)-2v(x,y))\omega(i,x) \omega(j,y)\\
&+\frac12 \sum_{x,y} (v(x,y+e_i)+v(x,y-e_i)-2v(x,y))\omega(i,x) \omega(j,y)\notag\\
&+\frac12\sum_{k\neq i,j} \sum_{x,y}  (v(x+e_k,y+e_k)+v(x-e_k,y-e_k)-2v(x,y))\omega(i,x) \omega(j,y)\label{S2}
\end{align}
%
%
%
%
%
\end{enumerate}
\end{lem}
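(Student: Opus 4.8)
The plan is to treat all four identities as direct computations in $L^2(\Omega,\pi)$, exploiting the fact that the coordinate functions $\omega(i,x)$ form an orthonormal system: each has $\pi$-mean zero and unit variance, distinct coordinates are $\pi$-independent, and consequently the degree-one monomials $\omega(i,x)$ and the degree-two monomials $\omega(i,x)\omega(j,y)$ with $i\neq j$ are pairwise orthogonal of unit norm. Every inner product below will collapse to a sum over matching indices, and every application of $S$ or $A$ reduces to evaluating the translation action on coordinates, namely $\tau_i\omega(i,x)=\omega(i,x)$ and $\tau_i^{\pm1}\omega(j,x)=\omega(j,x\pm e_i)$ for $j\neq i$, followed by a reindexing of the underlying lattice sum.

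For part (i) I would expand $(\phi,\psi)_\pi$ and $(\psi,\psi)_\pi$ and apply $(\omega(1,x),\omega(1,y))_\pi=\delta_{x,y}$, so that only the $x=0$ term survives in the first and only the diagonal in the second. For parts (ii) and (iii) the preliminary step is to compute $S\psi$ and $A\psi$ explicitly. Since translation along $e_1$ fixes every line parallel to $e_1$, the $i=1$ term drops out of both $S\psi$ and $A\psi$; for each $i\geq 2$ the shift $\tau_i^{\pm1}$ sends $\omega(1,x)$ to $\omega(1,x\pm e_i)$, so after reindexing the symmetric combination $\tfrac12(\psi\circ\tau_i+\psi\circ\tau_i^{-1}-2\psi)$ produces the discrete Laplacian $\tfrac12\sum_x(u(x+e_i)+u(x-e_i)-2u(x))\,\omega(1,x)$, while the antisymmetric combination $\psi\circ\tau_i-\psi\circ\tau_i^{-1}$ produces $-\sum_x\nabla_i u(x)\,\omega(1,x)$, multiplied by $\omega(i,0)$ inside $A\psi$. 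Pairing $S\psi$ with $\psi$ and collapsing by orthonormality leaves $\sum_x u(x)(u(x+e_i)+u(x-e_i)-2u(x))$, which I would convert to $-\Vert\nabla_i^+u\Vert_2^2$ by discrete summation by parts; I would track explicitly the factor $\tfrac12$ from the definition of $S$ against the factor arising from the summation-by-parts identity. Part (iii) is then read off directly from the computation of $A\psi$, the key point being that $\omega(i,0)$ (a direction-$i$ coordinate) and $\omega(1,x)$ (a direction-$1$ coordinate) lie in different families and so do not merge — hence $A\psi$ is genuinely a degree-two object rather than a scalar.

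For part (iv) I would apply $S=\tfrac12\sum_{k=1}^d(\tau_k+\tau_k^{-1}-2)$ to $\zeta$ term by term in $k$ and split into three cases according to how $\tau_k$ acts on the two coordinate families present. When $k=j$ only the factor $\omega(i,x)$ is shifted, producing after reindexing the second difference $v(x+e_j,y)+v(x-e_j,y)-2v(x,y)$ (first line); when $k=i$ only $\omega(j,y)$ is shifted, giving $v(x,y+e_i)+v(x,y-e_i)-2v(x,y)$ (second line); and when $k\neq i,j$ both factors are shifted \emph{simultaneously and in the same direction}, giving the diagonal second difference $v(x+e_k,y+e_k)+v(x-e_k,y-e_k)-2v(x,y)$, summed over all such $k$ (third line). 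In each case the reindexing moves the shift from the coordinate onto the argument of $v$, and the prefactor $\tfrac12$ is retained throughout.

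These are elementary computations, so I do not expect a genuine obstacle; the real work is bookkeeping, and the two places demanding care are as follows. First, one must correctly account for which coordinate family each translation acts on — in particular that a line is invariant under translation along its own axis, which is what kills the direction-$1$ contributions in (ii)–(iii) and splits (iv) into three distinct cases. Second, the lattice reindexing in each shifted sum is precisely where the minus signs, the gradient/Laplacian structure, and the constants are generated, so I would verify the normalization in (ii) carefully, since it is the one identity in which the $\tfrac12$ in $S$ does not simply pass through. The conceptually important point, rather than a difficulty, is that $A$ raises the polynomial degree from one to two; this is exactly why (iv) is needed, as the later estimation of $(A\psi,(\lambda-S)^{-1}A\psi)_\pi$ rests on understanding $S$ acting on degree-two functions.
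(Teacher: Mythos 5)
Your proposal is correct and takes essentially the same route as the paper, which likewise proves (i)--(ii) by orthonormality of the coordinates $\omega(i,x)$ together with reindexing and summation by parts, and treats (iii)--(iv) as direct bookkeeping with exactly your three-way case split $k=j$, $k=i$, $k\neq i,j$ behind (\ref{S2}). Your instinct to recheck the normalization in (ii) is well placed: with the convention $S=\tfrac12\sum_i(\tau_i+\tau_i^{-1}-2)$ the summation-by-parts computation actually yields $(\psi,S\psi)_\pi=-\tfrac12\sum_{i=2}^d\Vert\nabla^+_i u\Vert_2^2$, a harmless factor of $2$ relative to the stated identity that propagates only into the irrelevant constants of the final bounds.
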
 

\begin{proof}
The proof of (i) follows directly from the fact that $\omega(1,x)$ are i.i.d.~mean zero and variance 1 random variables. 

To prove (ii) first note that $\psi(\tau_1 \omega)=\psi(\tau_1^{-1} \omega)=\psi(\tau)$, and thus (after rearranging the terms) we get
\[
S\psi(\omega)=\frac{1}{2}\sum_{i=2}^d \sum_{x\in U_i} (u(x+e_i)+u(x-e_i)-2u(x)) \omega(1,x). 
\]
Hence, after a simple rearrangement of the terms we get
\begin{align*}
(\psi, S\psi)_\pi&=\frac{1}{2}\sum_{i=2}^d \sum_{x\in U_i} (u(x+e_i)+u(x-e_i)-2u(x))u(x)\\
&=-\sum_{i=2}^d  \sum_{x\in U_i} (u(x+e_i)-u(x))^2=-\sum_{i=2}^d \Vert \nabla^+_i u \Vert_2^2.
\end{align*}
Both (iii) and (iv)  follow from the definitions after some algebraic manipulations and  careful book-keeping.
%
%
%
%
%
%
\end{proof}

\subsection*{Fourier representation}
 For $f : \mathbb{Z}^d \to \R$, denote its Fourier transform by
\begin{align}\label{Fourier}
\wh f(p) := \sum_{x \in \mathbb{Z}^d} e^{\mathbf{i} \, p \cdot x} f(x)
	\qquad  p \in \mathbb{T}^d,
\end{align}
with $\mathbf{i}$ being the imaginary unit and $\mathbb{T}$ the torus on $[0,2\pi)$. 
(Although we use the same notation for the Laplace transform, it will not cause any confusion.)
For an $f:U_j\to \R$ we define $\wh f(p) $ by first extending $f$  to $\Z^d$ by setting it  equal to zero outside $U_j$ and then taking the Fourier transform. This is the same as using (\ref{Fourier}), but with a summation only on $U_j$. Note that if $f:U_j\to \R$ then $\wh f(p)$ does not depend on $p_j$, the $j$th coordinate of $p$. 

The Fourier transform of a function $c: \Z^d\times \Z^d \to \R$ is defined as 
\[
\wh c (p,q)=\sum_{x,y\in \Z^d} e^{\mathbf{i} (p \cdot x+q\cdot y)}c(x,y) \qquad p,q\in \T^d.
\]
We can extend this definition to functions  of the form $c: U_i\times U_j \to \R$ as in the single variable case. 

By Parseval's formula if  $f:U_i\to \R$ is in $L^2(U_i)$ then 
\[
\| f\|^2=\frac{1}{(2\pi)^d} \int_{\T^d} |\hat f(p)|^2 dp,
\]
and similarly, if $c:U_i\times U_j\to \R$ then 
\[
\|c\|^2=\sum_{x\in U_i} \sum_{y\in U_j} c(x,y)^2=\frac{1}{(2\pi)^{2d}} \int_{\T^d}\int_{\T^d} |\hat c(p,q)|^2 dp dq. 
\]
For an $u: U_1\to \R$ and  $j\neq 1$ we have 
\[
\wh{\nabla^+_j u}(p)=(e^{-\ii p_j}-1) \wh u(p).
\]
Note that $|e^{-\ii t}-1|^2=4 \sin^2(\tfrac{t}{2})$. For $p\in \T^d$ let 
\[
\wh d(p)=\sum_{j=1}^d 4 \sin^2(\tfrac{p_j}{2}),
\]
and define $\pp{j}=p-p_j \,e_j$ as the vector obtained from $p$ by replacing its $j$th coordinate with  $0$. 
Then with  $\psi$ defined as in \eqref{eq:Lower_FormOfPsi} we have
\begin{align}\label{J2}
(\psi, (\lambda-S)\psi)_\pi=\frac{1}{(2\pi)^d} \int_{\T^d} \left(\lambda+ \wh d(\pp{1})\right) | \wh u(p)|^2 \, dp.
\end{align}
Now suppose that $\zeta$ is defined as in part (iv) of Lemma \ref{Lower_Lem_SomeCalculations}. According to the lemma, we can express $S\zeta(\omega) $ as $ \sum_{x\in U_i} \sum_{
y\in U_j} v^*(x,y) \omega(i,x) \omega(j,y)$ where  $v^*$ can be read off from (\ref{S2}). From this the Fourier transform of $v^*$ can be expressed  as follows:
\begin{align*}
\wh v^*(p,q)&=\frac{1}{2}\left\{(e^{\ii p_j}+e^{-\ii p_j}-2)+(e^{\ii q_i}+e^{-\ii q_i}-2)+ \sum_{k\neq i,j} (e^{\ii (p_k+q_k)}+e^{-\ii (p_k+q_k)}-2)\right\} \wh v(p,q)\\
&=- \frac12 \wh d(\pp{i}+\qq{j}) \wh v(p,q).
\end{align*}
This also shows that $(\lambda-S)^{-1} \zeta(\omega)$ can be expressed as $ \sum_{x\in U_i} \sum_{
y\in U_j} s(x,y) \omega(i,x) \omega(j,y)$ with 
\[
\wh s(p,q)=\left(\lambda+\tfrac12 \wh d(\pp{i}+\qq{j})\right)^{-1} \wh v(p,q). 
\]
By Lemma \ref{Lower_Lem_SomeCalculations} we can express 
\[
A\psi(\omega)=\sum_{i=2}^d \sum_{x\in U_i}  \sum_{y\in U_1} v_i(x,y)  \omega(i,x) \omega(1,y)
\]
where 
\[
v_i(x,y)=-\mathbf{1}\{x=0\}  \nabla_i u(y), \quad \text{and} \quad  \wh v_i(p,q)=(e^{\ii q_i}-e^{-\ii q_i}) \wh u(q).
\]
This leads to the identity
\begin{align}\label{AS1}
(A\psi, (\lambda-S)^{-1} A\psi)_\pi&=\frac{1}{(2\pi)^{2d}} \int_{\T^d}\int_{\T^d} 
\sum_{i=2}^d 4\sin^2({q_i}) \left(\lambda+\tfrac12 \wh d(\pp{i}+\qq{1})\right)^{-1} 
|\wh u(q)|^2
 dp dq.
\end{align}
The proceeding integral inequalities follow from simple calculus, comparing $\sin^2(x/2)$ to $x^2$ on $(-\pi,\pi)$. 
\begin{lem}\label{lem:integr}
If $d=2$ then for all $\lambda>0$ and $p\in \T^2$ we have
\begin{align}\label{D2}
\frac{1}{(2\pi)^{d}} \int_{\T^d}  \left(\lambda+\tfrac12 \wh d(\qq{2}+\pp{1})\right)^{-1} dp\le C \lambda^{-1/2},
\end{align}
where $C$ is a finite constant. 
If $d=3$ then for all $p\in \T^3$ and $0<\lambda\le 1/3$ we have
\begin{align}\label{D3}
\frac{1}{(2\pi)^{d}} \int_{\T^d}  \left(\lambda+\tfrac12 \wh d(\qq{2}+\pp{1})\right)^{-1} dq\le C |\log( \lambda+\tfrac1{2} \sin^2(\tfrac{p_2}{2}))|,
\end{align}
where $C$ is a finite constant.
\end{lem}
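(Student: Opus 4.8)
The plan is to exploit the additive product structure of $\wh d$ together with the special index combination $\qq{2}+\pp{1}$, which causes one of the integration variables to drop out of the integrand entirely. In each dimension this reduces the estimate to a lower-dimensional integral that can then be handled by elementary calculus, using only the comparison between $\sin^2(t/2)$ and $t^2$ on $(-\pi,\pi)$ as flagged in the text.

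First I would write $\wh d(\qq{2}+\pp{1})$ out explicitly, recalling that $\qq{2}=q-q_2 e_2$ and $\pp{1}=p-p_1 e_1$. In the case $d=2$ (integrating $dp$, with $q$ fixed) the argument is $(q_1,p_2)$, so $\tfrac12\wh d(\qq{2}+\pp{1})=2\sin^2(q_1/2)+2\sin^2(p_2/2)$, which does not involve $p_1$; integrating out $p_1$ contributes a factor $2\pi$ and leaves a one-dimensional integral in $p_2$ in which $\lambda$ is effectively replaced by $a:=\lambda+2\sin^2(q_1/2)\ge\lambda$. In the case $d=3$ (integrating $dq$, with $p$ fixed) the argument is $(q_1,p_2,q_3+p_3)$, which does not involve $q_2$; integrating it out and applying the torus-invariant shift $q_3\mapsto q_3-p_3$ reduces the estimate to the two-dimensional integral of $\left(b+2\sin^2(q_1/2)+2\sin^2(q_3/2)\right)^{-1}$, where $b:=\lambda+2\sin^2(p_2/2)$.

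Next I would invoke $\sin^2(t/2)\ge t^2/\pi^2$ on $(-\pi,\pi)$ to replace the trigonometric denominators by polynomial ones. For $d=2$ this bounds the $p_2$-integral by the full-line integral $\int_{\R}(a+2t^2/\pi^2)^{-1}\,dt$, a standard Cauchy-type integral proportional to $a^{-1/2}\le\lambda^{-1/2}$, which after the $2\pi$ factor yields \eqref{D2} uniformly in $q$. For $d=3$ I would pass to polar coordinates on a disc containing $[-\pi,\pi]^2$ and use $\int_0^R r\,(b+cr^2)^{-1}\,dr=\tfrac{1}{2c}\log\tfrac{b+cR^2}{b}$, which produces a bound of the form $C\big(\log(b+c')-\log b\big)$.

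The main obstacle, and essentially the only delicate point, is the bookkeeping in $d=3$ needed to turn $\log\tfrac{b+c'}{b}$ into the asserted right-hand side $C\,|\log(\lambda+\tfrac12\sin^2(p_2/2))|$. Setting $b':=\lambda+\tfrac12\sin^2(p_2/2)\le b$, I would first record that $b'\le\lambda+\tfrac12\le\tfrac56<1$ (here $\lambda\le\tfrac13$ is used), so that $|\log b'|$ is bounded below by a positive constant and any additive $O(1)$ term such as $\log(b+c')$ can be absorbed into $C\,|\log b'|$. I would then split into the cases $b<1$ and $b\ge1$: when $b<1$ the inequality $b\ge b'$ gives $|\log b|\le|\log b'|$; when $b\ge1$ the two-dimensional integral is $O(1)$, while $b\ge1$ forces $\sin^2(p_2/2)$ to be of order one and hence keeps $|\log b'|$ bounded below. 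In both cases the integral is dominated by $C\,|\log b'|$, giving \eqref{D3}. I expect the dimensional reduction (dropping the silent coordinate and performing the torus shift) to be entirely routine, with all the genuine care concentrated in this final logarithmic comparison.
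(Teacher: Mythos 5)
Your proof is correct and follows exactly the route the paper indicates (it gives no written proof beyond the remark that the bounds ``follow from simple calculus, comparing $\sin^2(x/2)$ to $x^2$''): integrate out the silent coordinate, reduce to a one- or two-dimensional integral with the quadratic comparison, and in $d=3$ handle the logarithmic bookkeeping via $b'\le b\le 4b'$ and $b'\le 5/6<1$. Nothing is missing; this is a complete and faithful filling-in of the argument the authors left to the reader.
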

\subsection*{Estimating $(\phi, (\lambda - G)^{-1} \phi)_\pi $ using $\psi$}
Our goal is to give a lower bound on $2(\phi, \psi)_\pi - (\psi, (\lambda - S) \psi)_\pi - (A\psi, (\lambda - S)^{-1} A\psi)_\pi$ when $\psi$  is of the form (\ref{eq:Lower_FormOfPsi}), this will also give a lower bound for $(\phi, (\lambda - G)^{-1} \phi)_\pi $.

 By the inverse Fourier formula we have
\begin{align}\label{J1}
(\phi, \psi)_\pi =u(0)=\frac{1}{(2\pi)^d} \int_{\T^d} \wh u(p) dp.
\end{align}
We assumed that $u$ is even, hence $\wh u(p)$ is real. 

Using the expression  (\ref{AS1}) for $(A\psi, (\lambda - S)^{-1} A\psi)_\pi$ and the bounds of Lemma  \ref{lem:integr} we get that 
\begin{align}\label{J3d2}
(A\psi, (\lambda - S)^{-1} A\psi)_\pi\le \frac{C}{(2\pi)^d} \int_{\T^d} \lambda^{-1/2}  \sin^2({p_2}) |\wh u(p)|^2 dp \qquad \text{for $d=2$}
\end{align} 
and
\begin{eqnarray}\nonumber
&&(A\psi, (\lambda - S)^{-1} A\psi)_\pi
\\&&\hskip50pt\le  \frac{C}{(2\pi)^d} \int_{\T^d} \sum_{j=2}^3 |\log (\lambda+\tfrac1{2} \sin^2(\tfrac{p_j}{2}))| \sin^2({p_j})) |\wh u(p)|^2 dp\qquad \text{for $d=3$,}\label{J3d3}
\end{eqnarray}
if  $0<\lambda\le 1/3$. Now we have all the ingredients to prove the lower bounds  in Theorem \ref{Intro_Thm_MainThm}.

\begin{proof}[Proof of the lower bound for $d=2$ in Theorem \ref{Intro_Thm_MainThm}]
If $d=2$ then (\ref{J2}),  (\ref{J1}) and (\ref{J3d2})  show that for a $\psi$  of the form (\ref{eq:Lower_FormOfPsi}) we have
\begin{align*}
&2(\phi, \psi)_\pi - (\psi, (\lambda - S) \psi)_\pi - (A\psi, (\lambda - S)^{-1} A\psi)_\pi \\
&\qquad \ge 
\frac{1}{(2\pi)^2} \int_{\T^2} \left( 2\wh u(p) - \left(\lambda+ \wh d(\pp{1})\right) | \wh u(p)|^2-C \lambda^{-1/2}  \sin^2({p_2}) |\wh u(p)|^2\right)  dp,
\end{align*}
with a fixed constant $C>0$. 

The integral achieves its maximum for the choice 
\[
\wh u(p)=\frac{1}{\lambda+\wh d(\pp{1})+C\lambda^{-1/2} \sin^2({p_2})  },
\]
note that this is real, bounded and only depends on $p_2$, thus it corresponds to a function $u:U_1\to \R$ that satisfies our assumptions. The  value of the integral for this particular $u$ is 
\[
\frac{1}{2\pi} \int_{\T } \frac{1}{\lambda+4 \sin^2(\tfrac{p_2}{2}) +C\lambda^{-1/2} \sin^2({p_2})  } dp_2
\]
which can be bounded from below by $C' \lambda^{-1/4}$. This means that with this particular choice of $\psi$ the value of $2(\phi, \psi)_\pi - (\psi, (\lambda - S) \psi)_\pi - (A\psi, (\lambda - S)^{-1} A\psi)_\pi $ is at least $C' \lambda^{-1/4}$, hence $(\phi, (\lambda - G)^{-1} \phi)_\pi \ge C' \lambda^{-1/4}$. Thus $\wh E_G(\lambda)$ grows faster than $\lambda^{-9/4}$ as $\lambda\to 0$, from which the lower bound of  Theorem \ref{Intro_Thm_MainThm} on $\wh E(\lambda)$ follows. 
\end{proof}

\begin{proof}[Proof of the lower bound for $d=3$ in Theorem \ref{Intro_Thm_MainThm}]
In the $d=3$ case (\ref{J2}),  (\ref{J1}) and (\ref{J3d3}) lead to  
\begin{align*}
&2(\phi, \psi)_\pi - (\psi, (\lambda - S) \psi)_\pi - (A\psi, (\lambda - S)^{-1} A\psi)_\pi \\
&\qquad \ge 
\frac{1}{(2\pi)^3} \int_{\T^3} \left( 2\wh u(p) - \left(\lambda+ \wh d(\pp{1})+ C \sum_{j=2}^3 |\log (\lambda+\tfrac1{2} \sin^2(\tfrac{p_j}{2}))| \sin^2({p_j})\right) |\wh u(p)|^2 \right)  dp,
\end{align*}
assuming $0<\lambda\le 1/3$.
The integral takes its maximum for the choice
\[
\wh u(p)=\left(\lambda+ \wh d(\pp{1})+ C\sum_{j=2}^3 |\log (\lambda+\tfrac1{2} \sin^2(\tfrac{p_j}{2}))| \sin^2({p_j})\right)^{-1}
\]
which correspond to a valid function $u:U_1\to \R$. The value of the integral is 
\begin{align}\label{J30d3}
\frac{1}{(2\pi)^3} \int_{\T^3}\left(\lambda+ \wh d(\pp{1})+C \sum_{j=2}^3 |\log (\lambda+\tfrac1{2} \sin^2(\tfrac{p_j}{2}))| \sin^2({p_j})\right)^{-1} dp.
\end{align}
This integral is comparable (up to constants) to the integral
\[
\int_0^\pi \int_0^\pi \frac{dx\, dy}{\lambda+x^2+y^2+ x^2|\log(\lambda+ x^2)|+y^2|\log(\lambda+ y^2)|}
\]
which can be shown to be at least $C' \log \log(\lambda^{-1})$ for $0<\lambda\le 1/3$. The proof of the statement now follows as in the $d=2$ case. 
\end{proof}

\section{Proof of the upper bounds in Theorem \ref{Intro_Thm_MainThm}} 
\label{Sect_Upper}

As explained at the end of Section \ref{s:intro}, we have the bound
\[
(\phi, (\lambda - G)^{-1} \phi)_\pi 
	\le  (\phi, (\lambda - S)^{-1} \phi)_\pi.
\] 
If $\psi$ is of the form of  (\ref{eq:Lower_FormOfPsi}) then $(\lambda-S)^{-1} \psi$ can  be written as $\sum_{x \in U_1} u^*(x) \omega(1,x) $ with $\wh u^*(p)=\frac{1}{\lambda+\wh d(\pp{1})} \wh u(p)$. Since $\phi(\omega)=\sum_{x \in U_1} \mathbf{1}\{x=0\} \omega(1,x)$, we obtain
\begin{align}\label{S9}
 (\phi, (\lambda - S)^{-1} \phi)_\pi=\frac{1}{(2\pi)^d} \int_{\T^d} \frac{1}{\lambda+\wh d(\pp{1})} dp.
\end{align}
The integral in (\ref{S9}) can be bounded by $C \lambda^{-1/2}$ if $d=2$ and $C \log(\lambda^{-1})$ if $d=3$ and $0<\lambda<1/2$. From this the upper bounds in  Theorem \ref{Intro_Thm_MainThm} follow. 
Note also that for $d\ge 4$ the integral in  (\ref{S9})  can be bounded by a constant depending on $d$ and not $\lambda$, which shows that in these cases the model is diffusive.

\section{ Bounds for the MdM model with $d_\mathrm{fix} = 2$, $d_\mathrm{free} = 1$ }
\label{Sect_dfixis2}

Consider the modification of the three-dimensional MdM model 
with $d_\mathrm{fix} = 2$ and $d_\mathrm{free} = 1$, and assume that the $e_1, e_2$ directions are fixed.  Then the generator of this process is similar to (\ref{GG}), but  the $i=3$ term in the sum is replaced by $\tfrac{1}{2} f(\tau_i \omega) + \tfrac{1}{2} f(\tau_i^{-1} \omega) - f(\omega)$. Note that the symmetric part is still the same $S$ as in (\ref{Sgen}) for $d=3$, but the asymmetric part will only have the terms $i=1$ and $2$ from (\ref{Agen}). 

Because the symmetric part is the same as in the case of the $d=3$ Manhattan model, the upper bound proved there holds for this model as well.

For the lower bound we can also proceed with a similar argument as  in the case of the Manhattan model, the only modification is that  bound in  (\ref{J30d3}) now will only consist of the $j=2$ term. Hence we get
\begin{align*}
&2(\phi, \psi)_\pi - (\psi, (\lambda - S) \psi)_\pi - (A\psi, (\lambda - S)^{-1} A\psi)_\pi \\
&\qquad \ge 
\frac{1}{(2\pi)^3} \int_{\T^3} \left( 2\wh u(p) - \left(\lambda+ \wh d(\pp{1})+ C |\log (\lambda+\tfrac1{2} \sin^2(\tfrac{p_2}{2}))| \sin^2({p_2}))\right) |\wh u(p)|^2 \right)  dp,
\end{align*}
which leads to the following lower bound:
\begin{align*}
(\phi, (\lambda - G)^{-1} \phi)_\pi \ge \frac{1}{(2\pi)^3} \int_{\T^3}\left(\lambda+ \wh d(\pp{1})+C  |\log (\lambda+\tfrac1{2} \sin^2({p_2}))| \sin^2(\tfrac{p_2}{2})\right)^{-1} dp.
\end{align*}
For $0<\lambda\le 1/3$ the integral on the right is comparable to 
\[
\int_0^\pi \int_0^\pi \frac{dx\, dy}{\lambda+x^2+y^2+ x^2|\log(\lambda+ x^2)|}
\]
which can be further bounded from below by a constant times $\int_0^\pi \int_0^\pi \frac{dx\, dy}{\lambda+y^2 +x^2 \log(\lambda^{-1})}$. This integral is at least $C \sqrt{\log \lambda^{-1}}$ for $\lambda$ small, which leads to the lower bound in (\ref{MdM12}).

\bibliographystyle{alpha}

\begin{thebibliography}{99}

\bibitem{guillotin_le_2007}
N.~Guillotin-Plantard and A.~Le Ny.
\newblock Transient random walks in dimension two.
\newblock \emph{Theo.~Probab.~Appl.}, 52(4):815-826, 2007.


\bibitem{guillotin_le_2008}
N.~Guillotin-Plantard and A.~Le Ny.
\newblock A functional limit theorem for a 2d-random walk with dependent marginals.
\newblock \emph{Electronic Communications in Probability}, 13(34):337-351, 2008.

%
%

\bibitem{komorowski_olla_2002}
T.~Komorowski and S.~Olla.
\newblock On the superdiffusive behaviour of passive tracer with a Gaussian drift.
\newblock \emph{J.~Stat.~Phys.} 108:647-668, 2002.


\bibitem{kozma_toth_2017}
G.~Kozma and B.~T\'{o}th.
\newblock Central limit theorem for random walks in doubly stochastic random environment: $\mathscr{H}_{-1}$ suffices.
\newblock \emph{Annals of Probability}, 45:4307-4347, 2017. 
 
 
\bibitem{landim_quastel_salmhoffer_yau_2004}
C.~Landim, J.~Quastel, M.~Salmhoffer and H.-T.~Yau. 
\newblock Superdiffusivity of asymmetric exclusion process in dimensions one and two. 
\newblock \emph{Communications in Mathematical Physics}, 244:455-481, 2004.


\bibitem{matheron_marsily_1980}
G.~Matheron and G.~de Marsily.
\newblock Is transport in porous media always diffusive? A counterxample.
\newblock \emph{Water Resources Res.}, 16:901-907, 1980.


\bibitem{sethuraman}
S.~Sethuraman.
\newblock Central limit theorems for additive functionals of the simple exclusion process
\newblock \emph{Annals of Probability}, 28:277-302, 2000.


\bibitem{tarres_toth_valko_2012}
P.~Tarr\`{e}s, B.~T\'{o}th and B.~Valk\'{o}.
\newblock Diffusivity bounds for 1d Brownian polymers 
\newblock \emph{Annals of Probability}, 40:695-713, 2012.


\bibitem{toth_2018}
B.~T\'{o}th. 
\newblock Quenched central limit theorem for random walks in doubly stochastic random environment.
\newblock To appear: \emph{Annals of Probability}. \texttt{arXiv:1704.06072}, 2018. 


\bibitem{toth_valko_2012}
B.~T\'{o}th and B.~Valk\'{o}.
\newblock Superdiffusive bounds on self-repellent Brownian polymers and diffusion in the curl of the Gaussian free field in $d=2$.  
\newblock \emph{Journal of Statistical Physics}, 147:113-131, 2012. 

\bibitem{yau2004}
H.-T.~Yau.
\newblock $(\log t)^{2/3}$ law of the two dimensional asymmetric simple exclusion process.
\newblock \emph{Annals of Mathematics}, 159:377-405, 2004
\end{thebibliography}

\end{document}